\def\biblio{\bibliography{duality}\bibliographystyle{alpha}}
\definecolor{dark-red}{rgb}{0.5,0.15,0.15}
\definecolor{dark-blue}{rgb}{0.15,0.15,0.6}
\definecolor{dark-green}{rgb}{0.15,0.6,0.15}
\newcommand{\lra}[1]{\overset{#1}{\longrightarrow}}
\renewcommand*{\backref}[1]{}
\renewcommand*{\backrefalt}[4]{%
  \ifcase #1 %
No citations.
  \or
(cit. on p. #2).%
  \else
(cit on pp. #2).%
  \fi%
}
\newtheorem{thm}{Theorem}[section]
\newtheorem{cor}[thm]{Corollary}
\newtheorem{prop}[thm]{Proposition}
\newtheorem{lem}[thm]{Lemma}
\newtheorem{quest}[thm]{Question}
\theoremstyle{definition}
\newtheorem{defn}[thm]{Definition}
\newtheorem{ex}[thm]{Example}
\theoremstyle{remark}
\newtheorem{rem}[thm]{Remark}
\theoremstyle{theorem}
\newtheorem*{thm*}{Theorem}
\newtheorem*{prop*}{Proposition}
\let\c@equation\c@thm
\numberwithin{equation}{section}
\DeclareMathOperator{\colim}{colim}
\DeclareMathOperator{\Spec}{Spec}
\DeclareMathOperator{\Spf}{Spf}
\newcommand{\Q}{\mathbb{Q}}
\newcommand{\bE}{\mathbb{E}}
\DeclareMathOperator{\Level}{Level}
\newcommand{\Prod}[1]{\underset{#1}{\prod}}
\newcommand{\E}{E_{n}}
\newcommand{\Z}{\mathbb{Z}}
\Crefname{figure}{Figure}{Figures}
\Crefname{assu}{Assumption}{Assumptions}
\Crefname{lem}{Lemma}{Lemmas}
\newcommand{\lkt}{\Lambda_{k,t}}
\newcommand{\fp}{\mathfrak{p}}
\newcommand{\fm}{\mathfrak{m}}
\newcommand{\fq}{\mathfrak{q}}
\newcommand{\recollement}[5]{
\xymatrix{{#1} \ar[r]|-{#2} & #3 \ar[r]|-{#4} \ar@<1ex>[l]^-{{#2}_!} \ar@<-1ex>[l]_-{{#2}^*} & #5, \ar@<1ex>[l]^-{{#4}!} \ar@<-1ex>[l]_-{{#4}^*}
}}
\let\lim\relax
\DeclareMathOperator{\lim}{lim}
\newcommand{\bG}{\mathbb{G}}
\newcommand{\bW}{\mathbb{W}}
\newcommand{\cL}{\mathcal{L}}
\title{Excellent rings in transchromatic homotopy theory}
\author{Tobias Barthel}
\address{Department of Mathematical Sciences, University of Copenhagen, Universitetsparken 5, 2100 K{\o}benhavn {\O}, Denmark}
\email{tbarthel@math.ku.dk}
\author{Nathaniel Stapleton}
\address{Fakult{\"a}t f{\"u}r Mathematik, Universit{\"a}t Regensburg, 93040 Regensburg, Germany}
\email{nat.j.stapleton@gmail.com}
\date{\today}
\begin{document}


\begin{abstract}
The purpose of this note is to verify that several basic rings appearing in transchromatic homotopy theory are Noetherian excellent normal domains and thus amenable to standard techniques from commutative algebra. In particular, we show that the coefficients of iterated localizations of Morava $E$-theory at the Morava $K$-theories are normal domains and also that the coefficients in the transchromatic character map for a fixed group form a normal domain.
\end{abstract}

 
\maketitle

\def\biblio{}

\section{Introduction}

Excellent rings were introduced by Grothendieck as a well-behaved class of commutative Noetherian rings general enough for the purposes of arithmetic and algebraic geometry, 
while excluding several pathological examples of Noetherian rings found by Nagata. In particular, the collection of excellent rings is closed under localization and completion. 

These algebraic operations describe the effect on coefficient rings of the derived localizations appearing in stable homotopy theory.  Most prominently, such Bousfield localizations occur when comparing different chromatic layers of the stable homotopy category, a subject known as transchromatic homotopy theory. The main goal of this note is to demonstrate that important rings appearing in transchromatic homotopy theory are built from excellent rings, and hence surprisingly well-behaved. Specifically, although these rings are rather complicated algebraically and in general not regular, we prove that they are integral domains and thus regular in codimension 1. However, establishing these fundamental properties directly turned out to be considerably more difficult than anticipated, which led us to employ the theory of excellent rings instead. We hope that the methods used here will prove useful in tackling similar problems in related contexts. 

Our first result concerns the rings $L_{t,n}=\pi_0L_{K(t)}E_n$ obtained as the localization of Morava $E$-theory $E_n$ of height $n$ at the Morava $K$-theory $K(t)$ of height $t<n$. This is a fundamental example of a transchromatic ring, since the localization map
\[
\xymatrix{E_n \ar[r] & L_{K(t)}E_n}
\]
shifts chromatic height from $n$ to $t$. These rings were studied in detail by Mazel--Gee, Peterson, and Stapleton in \cite{piperings} using the theory of pipe rings;  in particular, they show that $L_{t,n}$ represents a natural moduli space. We complement their work by showing that the $L_{t,n}$ are well-behaved from the point of view of commutative algebra. 

\begin{thm*}[\Cref{prop:lt}]
The ring $L_{t,n}$ is a Noetherian excellent normal domain. More generally, the same conclusion holds for any iteration of localizations of $E_n$ at Morava $K$-theories. 
\end{thm*}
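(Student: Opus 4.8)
The plan is to prove the general statement by induction on the number of localizations, maintaining the stronger invariant that the ring under consideration is a Noetherian excellent normal domain presented explicitly as an iterated ``invert-and-complete'' modification of $\pi_0\E = W(k)[[u_1,\ldots,u_{n-1}]]$. The homotopy-theoretic input, which I would take from the description of these rings in \cite{piperings}, is that localizing such a ring spectrum $R$ at $K(t)$ has the following effect on $\pi_0$: one inverts the element $u_t$ and then completes at the ideal $I_t = (p,u_1,\ldots,u_{t-1})$. Thus each localization contributes the two algebraic operations ``invert $u_t$'' and ``complete at $I_t$'', and it suffices to show that each preserves the class of Noetherian excellent normal domains, keeping track of the primality of the relevant ideals.

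For the base case, $\pi_0\E = W(k)[[u_1,\ldots,u_{n-1}]]$ is a power series ring over the complete discrete valuation ring $W(k)$, hence a complete regular local ring. Complete local rings are excellent and regular local rings are normal domains, so the base case holds; moreover each $I_t$ is prime since the quotient $k[[u_t,\ldots,u_{n-1}]]$ is a domain. The localization step is routine: if $R$ is a Noetherian excellent normal domain and $I_t$ is a prime not containing $u_t$, then $S := R[u_t^{-1}]$ is again Noetherian, excellent (by closure of excellent rings under localization, recalled in the introduction), and a normal domain, with $I_tS$ still prime and $S/I_tS$ a domain.

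The crux is the completion step. Set $\widehat S := S^{\wedge}_{I_t}$. It is Noetherian, being an adic completion of a Noetherian ring, and excellent by closure of excellent rings under completion. For normality I would use that $S$ is a $G$-ring: the completion map $S\to\widehat S$ is then regular, so $\widehat S$ is flat over $S$ with geometrically normal fibers, and normality of $S$ ascends to $\widehat S$. To upgrade normality to the domain property, I note that a Noetherian normal ring is a finite product of normal domains, so it is enough that $\Spec\widehat S$ be connected. Since $\widehat S$ is complete with respect to $I_t\widehat S$, idempotents lift bijectively along $\widehat S \to \widehat S / I_t\widehat S = S/I_t$; as $S/I_t$ is a domain it carries no nontrivial idempotents, hence neither does $\widehat S$, and a connected Noetherian normal ring is a domain.

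I expect the normality transfer in the completion step to be the main obstacle. Neither normality nor the domain property is preserved by completion in general—completing a normal domain at a non-maximal ideal can easily destroy both—so the argument genuinely leans on the theory of excellent rings: it is precisely the geometric regularity of the formal fibers that makes $S\to\widehat S$ a regular morphism and thereby transports normality across the completion, after which the idempotent-lifting argument recovers irreducibility. A secondary point requiring care is the homotopy-theoretic bookkeeping needed to run the induction, namely checking that each iterated $K$-localization stays within the class of even-periodic rings for which the ``invert $u_t$, then complete at $I_t$'' description of $\pi_0 L_{K(t)}$ is valid.
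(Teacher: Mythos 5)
Your overall strategy coincides with the paper's: induction on the number of localizations, the ``invert $u_t$, complete at $I_t$'' description of $\pi_0 L_{K(t)}$, and a completion step that transports normality along the regular morphism $S \to \widehat{S}$ via the $G$-ring property and then recovers the domain property by lifting idempotents along $\widehat{S} \to \widehat{S}/I_t\widehat{S} \cong S/I_tS$; this completion step is essentially the paper's \Cref{prop:normal}. However, your induction does not close as written. Each application of your localization/completion step \emph{assumes} that $I_t$ is a prime ideal of the current ring $R$, but your argument only ever establishes primality of the ideal just used: from $\widehat{S}/I_t\widehat{S} \cong S/I_tS$ you learn that $I_t\widehat{S}$ is prime. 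The next stage of the induction localizes and completes at a strictly smaller ideal $I_{t'}$ with $t' < t$, and nothing in your argument shows that $I_{t'}\widehat{S}$ is prime. This is not automatic: ideal-adic completion does not preserve primality of arbitrary ideals (that kind of failure is precisely why the excellence machinery is needed at all), so ``every $I_s$ with $s \le t$ is prime in the current ring'' is an invariant that has to be proved, not assumed.

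The paper fills exactly this hole with \Cref{primeideal}: in a domain, if $(r_1,\ldots,r_l)$ is a prime ideal generated by a regular sequence, then every initial segment $(r_1,\ldots,r_m)$, $m<l$, is again prime. Applied to the sequence $p,u_1,\ldots,u_{t-1}$ generating $I_t$ inside $\pi_0 L_{K(T')}E_n$, this propagates primality from $I_t$ down to the smaller ideals $I_{t'}$. To invoke it one must know that $p,u_1,\ldots,u_{t-1}$ is a regular sequence in the current ring, and this comes from flatness over $E_n^0$, which is the content of the paper's \Cref{lem:flatlt}. That flatness lemma also settles the ``homotopy-theoretic bookkeeping'' you deferred: the formula $\pi_0L_{K(t)}N \cong (\pi_0 N[u_t^{-1}])^{\wedge}_{I_t}$ from \cite{bs_centralizers} is valid for \emph{flat} $E_n$-modules, and flatness is maintained through the tower by Hovey's theorem \cite[Prop.~A.15]{frankland}. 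So the flatness statement you treated as a side issue is load-bearing twice: it legitimizes the algebraic description of each localization and it supplies the regular-sequence hypothesis needed to keep the ideals prime. (A smaller point: the closure of excellence under ideal-adic completion of non-local rings, which you cite as routine, is itself the recent theorem of Gabber--Kurano--Shimomoto \cite{completionexcellent} on which the paper relies.)
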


As a consequence, $L_{t,n}$ has the cancellation property, which is crucial in applications as for example in \cite{bs_bpetheory}. Moreover, this result about $L_{t,n}$ forms the basis for deducing similar properties of other prominent rings appearing in transchromatic homotopy theory. Specifically, we study the completion at $I_t$ of the transchromatic character rings $C_{t,k}$ 
\[
\hat{C}_{t,k} = (C_{t,k})^{\wedge}_{I_t}.
\]
that were first introduced in \cite{stapleton_tgcm}. For a fixed finite group $G$ there exists a $k \geq 0$ so that $\hat{C}_{t,k}$ may be used to build a transchromatic character map
\[
\hat{C}_{t,k} \otimes_{E_{n}^0} E_{n}^0(BG) \lra{\cong} \hat{C}_{t,k} \otimes_{L_{t,n}} L_{K(t)}E_{n}^{0}(BG^{B\Z_{p}^{n-t}}).
\]

\begin{thm*}[\Cref{cor:ctk} and \Cref{cor:ct}]
The transchromatic character ring $\hat{C}_{t,k}$ is a Noetherian excellent normal domain for all $t$ and $k$. The colimit $\colim_k \hat{C}_{t,k}$ is normal.
\end{thm*}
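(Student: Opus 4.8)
The plan is to realize $\hat{C}_{t,k}$ as the $I_t$-adic completion of a ring $C_{t,k}$ that is built from $L_{t,n}$ by a localization and a finite \'etale extension, and then to let the theory of excellent rings transport normality across the completion. First I would record the presentation of the character ring: $C_{t,k}$ is a localization of $L_{t,n} \otimes_{E_n^0} E_n^0(B(\Z/p^k)^{n-t})$, the base change along $E_n^0 \to L_{t,n}$ of the finite free $E_n^0$-algebra $E_n^0(B(\Z/p^k)^{n-t})$. Because $L_{t,n}$ is Noetherian and excellent by \Cref{prop:lt}, and both finitely generated algebras and localizations preserve these two properties, $C_{t,k}$ is Noetherian and excellent at once.

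The substance of the first assertion is that $C_{t,k}$ is a normal domain. After the localization, the structure map $L_{t,n} \to C_{t,k}$ is finite \'etale: it trivializes the $p^k$-torsion of the height $n-t$ \'etale quotient of the $p$-divisible group of $E_n$ over $L_{t,n}$, and the scheme of such trivializations is a torsor under $\mathrm{GL}_{n-t}(\Z/p^k)$, hence a finite \'etale cover. \'Etale extensions of normal rings are normal, so normality of $C_{t,k}$ follows from normality of $L_{t,n}$, while the domain property reduces to connectedness of $\Spec C_{t,k}$, that is, to irreducibility of this cover. I expect this connectedness to be the first genuine obstacle, since it is exactly the kind of direct irreducibility statement that the introduction flags as deceptively difficult.

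Passing to $\hat{C}_{t,k} = (C_{t,k})^{\wedge}_{I_t}$ is where excellence becomes indispensable. Completion preserves excellence, so $\hat{C}_{t,k}$ is again Noetherian and excellent. For normality I would use that an excellent ring has geometrically regular formal fibers, so the completion map $C_{t,k} \to \hat{C}_{t,k}$ is regular and normality ascends along it; this is precisely the step that fails for a general Noetherian normal ring and that motivates the whole approach. A normal Noetherian ring is a finite product of normal domains, so it remains to verify that $\hat{C}_{t,k}$ is connected. Since $I_t\hat{C}_{t,k}$ lies in the Jacobson radical, idempotents lift along $\hat{C}_{t,k} \to \hat{C}_{t,k}/I_t \cong C_{t,k}/I_t$, reducing connectedness of $\hat{C}_{t,k}$ to connectedness of the special fiber $C_{t,k}/I_t$; the latter is a localization of a Morava $K(t)$-theory ring of $B(\Z/p^k)^{n-t}$, which is connected, and the domain property follows.

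Finally, for $\colim_k \hat{C}_{t,k}$ I would argue directly from the two properties already in hand. The transition maps $\hat{C}_{t,k} \to \hat{C}_{t,k+1}$ are injective, and a filtered colimit of normal domains along injective maps is a normal domain: any zero-divisor relation, and any witness to the integral dependence of an element of the fraction field, involves only finitely many elements and hence already holds at a finite stage, where it is controlled by the normality of the corresponding $\hat{C}_{t,k}$. Thus the colimit is normal, although in general neither Noetherian nor excellent, which matches the weaker conclusion of the statement. Throughout, the principal obstacles are the two connectedness inputs, while excellence supplies the preservation of normality under completion that would otherwise be unavailable.
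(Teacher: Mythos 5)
Your proposal breaks at the two places where the paper's actual argument does its real work. First, the assertion that $L_t \to C_{t,k}$ is finite \'etale is false for $t \geq 1$. The ring $C_{t,k}$ is a localization of $L_t \otimes_{\E^0} \E^0(B\lkt^*)$, which parametrizes homomorphisms from $\lkt$ into the $p^k$-torsion of the \emph{full} $p$-divisible group $L_t \otimes_{\E^0}\bG_{E_n}$, not into its \'etale quotient $\bG_{et}$. Inverting $S_{t,k}$ selects the locus where the composite $\lkt \to \bG_{et}[p^k]$ is an isomorphism, but the resulting scheme still fibers over $\mathrm{Isom}(\lkt,\bG_{et}[p^k])$ with fibers that are torsors under $\Hom(\lkt,\bG_{L_t}[p^k]) \cong (\bG_{L_t}[p^k])^{n-t}$, coming from the connected height-$t$ part. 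For $t\geq 1$ this is finite flat of rank $p^{kt(n-t)}$ and not \'etale, so $\Spec C_{t,k}$ is an infinitesimal thickening of the $\mathrm{GL}_{n-t}(\Z/p^k)$-torsor you describe; ``\'etale over normal is normal'' does not apply, and indeed the paper never claims $C_{t,k}$ itself is normal --- only its completion $\hat{C}_{t,k}$. (Only for $t=0$, where $\bG_{L_t}$ is trivial, is your \'etaleness claim correct.)

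Second, the connectedness inputs, which you rightly identify as the crux, are not actually supplied. For $C_{t,k}$ you explicitly defer the issue, and for the special fiber your claim that $C_{t,k}/I_t$ is a localization of the connected Artinian local ring $K(t)^0(B\lkt^*)$ is incorrect: mod $I_t$ the ring $(L_t/I_t) \otimes_{\E^0} \E^0(B\lkt^*)$ is free of rank $p^{kn(n-t)}$ over $L_t/I_t$, not $p^{kt(n-t)}$, because the \'etale quotient $\bG_{et}[p^k]$ does not go away mod $I_t$; its connected components correspond to monodromy orbits of homomorphisms $\lkt \to \bG_{et}[p^k]$ over a separable closure. After localizing at $S_{t,k}$, the connectedness you need is precisely the statement that the monodromy representation of $\pi_1^{\mathrm{et}}(\Spec L_t/I_t)$ acts transitively on isomorphisms $\lkt \cong \bG_{et}[p^k]$, i.e.\ surjects onto $\mathrm{GL}_{n-t}(\Z/p^k)$. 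Already for $n=1$, $t=0$ this is the irreducibility of the cyclotomic polynomial $\Phi_{p^k}$ over $\Q_p$, and in general it is a hard theorem --- exactly the kind of direct irreducibility statement the paper says it could not prove head-on. The paper's route avoids it entirely: \Cref{prop:ctkformula} identifies $\hat{C}_{t,k}$, \emph{after} $I_t$-completion, with a localization-completion of $L_t \otimes_{\E^0} D_{\lkt}$, where $D_{\lkt}$ is Drinfeld's ring of level structures; Drinfeld's theorem that $D_{\lkt}$ is regular local makes it a Noetherian excellent normal domain, and \Cref{prop:normal} then propagates ``Noetherian excellent normal domain'' through localization and completion, so the domain property (your missing connectedness) comes out for free. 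This passage through level structures is the key idea and is absent from your proposal. Your final paragraph on $\colim_k \hat{C}_{t,k}$ is fine and agrees with the paper's appeal to \Cref{lem:normal}(3).
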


There are two key ingredients in the proof of this theorem: Firstly, a recent theorem of Gabber--Kurano--Shimomoto on the ideal-adic completion of excellent rings. Secondly, we use an identification of $\hat{C}_{t,k}$ with a localization and completion of a certain ring of Drinefeld level structure on the formal group associated to $E_n$, thereby providing a new perspective on these transchromatic character rings. 

\subsection*{Conventions and references}

We rely heavily on a number of results from commutative algebra that cannot be found in standard textbooks. Rather than locating the earliest published reference for each of the facts used here, we will always refer to the stacks project \cite{stacks-project}. All rings in this note are assumed to be commutative and all ideals are taken to be finitely generated. 

\subsection*{Acknowledgements}

We would like to thank Frank Gounelas for providing helpful geometric intuition and Niko Naumann and Paul VanKoughnett for useful discussions. The second author is supported by SFB 1085 \emph{Higher Invariants} funded by the DFG.

\section{Excellent rings}

We start this section with an example illustrating that localizations of complete regular local Noetherian rings are more complicated than what one might expect. This shows that, while the localization or completion of a regular local ring is again regular, the class of regular local rings is not closed under these operations. 

\begin{ex}
Let $k$ be a field and consider $R = k\llbracket x,y \rrbracket$. The ring $A = y^{-1}R \cong k\llbracket x \rrbracket (\!(y)\!)$ is not local and, in particular, it is not isomorphic to the regular local ring $k(\!(y)\!)\llbracket x \rrbracket$. Indeed, we claim that both $(x)$ and $(x-y)$ are different maximal ideals in $A$. The ideal $(x)$ is clearly maximal, so it remains to show that $(x-y)$ is maximal as well. To this end, note that 
\[
A/(x-y) \cong k(\!(x)\!),
\]
which is a field, hence $(x-y) \subset A$ is maximal as claimed. In contrast, $(x-y) = x(1-y/x)$ has formal inverse $1/x \cdot \sum_{i=0}^{\infty}(y/x)^i$, so $x-y$ is a unit in $k(\!(y)\!)\llbracket x \rrbracket$.

After completion, this subtlety disappears: in fact, $A_{(x)}^{\wedge} \cong k(\!(y)\!)\llbracket x \rrbracket$, see \cite[B.2]{piperings}.
\end{ex}

The Auslander--Buchsbaum theorem asserts that regular local rings are unique factorization domains, but in light of the previous example, we may not expect this property to be preserved under the operations appearing in transchromatic theory. Instead, we will study the larger class of normal rings, which corresponds to regularity in codimension 1 via Serre's criterion. 

To this end, recall that a normal domain is a domain which is integrally closed in its quotient field. A ring $R$ is called normal if the localizations $R_{\fp}$ are normal domains for all primes ideals $\fp \subset R$. The following lemma collects the key properties of normal rings that we will apply in this note.

\begin{lem}\label{lem:normal}
Let $R$ be a commutative ring.
\begin{enumerate}
	\item If $R$ is regular, then $R$ is normal.
	\item If $R$ is normal, then any of its localizations is normal.
	\item Filtered colimits of normal rings are normal.
	\item If $R$ is Noetherian and normal, then it is a finite product of normal domains.
\end{enumerate}
\end{lem}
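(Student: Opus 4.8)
The plan is to handle the four statements in sequence, reducing each to a standard fact that can be cited from \cite{stacks-project} and using throughout the definition that $R$ is normal exactly when $R_\fp$ is a normal domain for every prime $\fp \subset R$.

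Statement (1) is local: if $R$ is regular then each $R_\fp$ is a regular local ring, and every regular local ring is a normal domain. I would obtain the latter either from the Auslander--Buchsbaum theorem, which exhibits a regular local ring as a unique factorization domain, together with the fact that any factorial domain is integrally closed in its field of fractions, or from Serre's criterion, since a regular ring satisfies $(R_1)$ and $(S_2)$. Statement (2) is immediate from transitivity of localization: the primes of a localization $S^{-1}R$ have the form $S^{-1}\fp$ with $\fp \cap S = \emptyset$, and $(S^{-1}R)_{S^{-1}\fp} \cong R_\fp$ is a normal domain by hypothesis, so $S^{-1}R$ is normal.

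For (4) I would argue through the geometry of $\Spec R$. Normality forces $R$ to be reduced, and it forces each prime to contain a unique minimal prime, so distinct minimal primes $\fp_i \neq \fp_j$ are comaximal: otherwise $\fp_i, \fp_j \subseteq \fm$ for some maximal ideal $\fm$ would give $R_\fm$ two distinct minimal primes, contradicting that it is a domain. Since $R$ is Noetherian there are finitely many minimal primes $\fp_1, \dots, \fp_r$; reducedness gives $\bigcap_i \fp_i = 0$, and comaximality lets the Chinese remainder theorem produce an isomorphism $R \cong \prod_{i=1}^r R/\fp_i$. Each factor is a domain, and is a localization of $R$ at an idempotent, hence normal by (2); a normal domain is the same thing as an integrally closed domain, so each $R/\fp_i$ is a normal domain as required.

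The main obstacle is (3), where the non-Noetherian colimit and the possibly non-injective transition maps require care. Writing $R = \colim_i R_i$ and fixing a prime $\fp \subset R$ with preimages $\fp_i \subset R_i$, the first step is to identify $R_\fp \cong \colim_i (R_i)_{\fp_i}$, using that localization commutes with filtered colimits and that the multiplicative sets $R_i \setminus \fp_i$ assemble to $R \setminus \fp$. Each $(R_i)_{\fp_i}$ is a normal domain, and the key principle is that every relation is witnessed at a finite stage: if a product of two elements vanishes in $R_\fp$ it already vanishes in some $(R_j)_{\fp_j}$, forcing one factor to vanish there, so $R_\fp$ is a domain; and if an element of $\operatorname{Frac}(R_\fp)$ satisfies a monic integral equation over $R_\fp$, then after clearing denominators the equation holds at a finite stage $(R_j)_{\fp_j}$, where integral closedness places the element in $(R_j)_{\fp_j}$ and hence in $R_\fp$. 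Thus $R_\fp$ is a normal domain for every $\fp$, so $R$ is normal. I expect the only delicate bookkeeping to be the verification that nonzero elements stay nonzero, and that denominators remain nonzerodivisors, at sufficiently late stages; these follow from the domain property already established at each finite stage.
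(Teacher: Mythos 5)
Your proof is correct, but it is genuinely more self-contained than the paper's, which handles (1), (2), and (3) purely by citation to the Stacks Project (Tags 0567, 037C, 037D) and reduces (4) to Tag 030C by verifying its two hypotheses: finiteness of the set of minimal primes (from Noetherianity) and reducedness (since being reduced is a local property and $R$ is locally a domain). Your treatment of (4) starts from the same two observations but then carries out the decomposition by hand: unique minimal prime below each maximal ideal forces pairwise comaximality of the minimal primes, and the Chinese remainder theorem together with $\bigcap_i \fp_i = 0$ gives $R \cong \prod_i R/\fp_i$; this is essentially the content of the cited tag, made explicit. Your arguments for (1) (Auslander--Buchsbaum or Serre's criterion applied locally), (2) (the prime correspondence $(S^{-1}R)_{S^{-1}\fp} \cong R_\fp$), and especially (3) (identifying $R_\fp \cong \colim_i (R_i)_{\fp_i}$ and witnessing both zero-divisor relations and integral equations at a finite stage) are complete and correct; the finite-stage argument in (3) is exactly the right mechanism, and your check that the denominator $b$ remains nonzero at late stages is the only delicate point and you handle it correctly (if $b_{j'}=0$ then its image $b=0$). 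The one step you pass over quickly is at the end of (4): to conclude that each factor $R/\fp_i$, being a domain and normal in the localized sense, is integrally closed in its fraction field, you need the standard fact that a domain all of whose localizations at primes are integrally closed is itself integrally closed (via $R = \bigcap_\fm R_\fm$ inside $\operatorname{Frac}(R)$); this is standard and also built into the paper's cited tag, so it is a presentational omission rather than a gap. What the paper's approach buys is brevity and reliance on refereed references; what yours buys is a proof readable without chasing tags, at the cost of length.
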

\begin{proof}
The first claim is \cite[Tag 0567]{stacks-project}, the second one is \cite[Tag 037C]{stacks-project}, and the third one is \cite[Tag 037D]{stacks-project}. To prove the last claim we apply \cite[Tag 030C]{stacks-project}, so that we have to check that $R$ is reduced and contains only finitely many minimal primes. Because $R$ is Noetherian, the second condition is satisfied. For the first one, note that, since $R$ is normal it is reduced, as being reduced is a local property and $R$ is locally a domain. 
\end{proof}

The definition of an excellent ring is rather involved. For the convenience of the reader, we give this definition; we will assume that the reader is familiar with the definition of a regular local ring. 

\begin{defn}
Recall the following definitions for a commutative ring $R$:
	\begin{enumerate}
		\item A ring $R$ is a $G$-ring if for all prime ideals $\fp \subset R$, $R_{\fp}$ is a local $G$-ring. A local ring $(R,\fm)$ is a local $G$-ring if the completion map $R \rightarrow R^{\wedge}_{\fm}$ is a regular morphism. This means that the map is flat and for all primes $\fp \subset R$, $\kappa(\fp) \otimes_R R^{\wedge}_{\fm}$ is Noetherian and geometrically regular over $\kappa(\fp)$. A $k$-algebra $R$ is geometrically regular over $k$ if for every finite extension $k \subset K$, $R \otimes_k K$ is regular, where a ring is regular if it is locally regular.
		\item A ring $R$ is J-2 if for all finite type extensions $R \rightarrow S$ the ring $S$ is J-1. A ring $R$ is J-1 if the subset of points of $\fp \in \Spec(R)$ with the property that $R_\fp$ is regular local is open.
		\item A ring $R$ is universally catenary if it is Noetherian and for every finite type extension $R \rightarrow S$, the ring $S$ is catenary. A ring $R$ is catenary if for all pairs of primes ideals $\fq \subset \fp \subset R$, all maximal chains of prime ideals $\fq = P_0 \subset P_1 \subset \ldots \subset P_l = \fp$ have the same length.
	\end{enumerate}
Finally, a ring $R$ is called excellent if it is Noetherian, a $G$-ring, J-2, and universally catenary. 
\end{defn}

For example, fields, Dedekind domains with characteristic $0$ quotient field, and all complete local Noetherian rings are excellent, see \cite[Tag 07QW]{stacks-project}. Moreover, the same reference shows that any algebra of finite type over an excellent ring is excellent.

The next proposition essentially generalizes \cite[Tag 0C23]{stacks-project} to non-local rings. The purpose is to show that the collection of Noetherian excellent normal rings is closed under the operations of localization at a multiplicatively closed set and completion at a prime ideal. The key point is that we do not assume that our rings are local, as the rings that naturally arise in transchromatic homotopy theory are often not local.


\begin{prop}\label{prop:normal}
Suppose $R$ is a Noetherian excellent normal ring, $\fp \subset R$ is a prime ideal, and $S \subseteq R \setminus \fp$ is multiplicatively closed, then $A = (R[S^{-1}])_{\fp}^{\wedge}$ is a Noetherian excellent normal domain. 
\end{prop}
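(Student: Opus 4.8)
The plan is to reduce to the completion of a single Noetherian excellent normal local domain and then to extract normality from the $G$-ring property hidden in excellence. First I would note that, because $S \subseteq R \setminus \fp$, inverting $S$ is already subsumed by localizing at $\fp$: the saturated multiplicative set generated by $S$ and $R \setminus \fp$ is just $R \setminus \fp$, so the canonical map gives an isomorphism $(R[S^{-1}])_{\fp} \cong R_{\fp}$. Consequently $A \cong (R_{\fp})^{\wedge}$, the completion of the local ring $R_{\fp}$ at its maximal ideal, and the two localization steps on the non-local ring $R$ collapse into one.

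Next I would check that $R_{\fp}$ is a Noetherian excellent normal local domain. It is Noetherian as a localization of $R$, and normal by \Cref{lem:normal}(2); since it is local and normal it is in fact a normal domain, as normality of $R_{\fp}$ precisely asserts that $(R_{\fp})_{\fp R_{\fp}} = R_{\fp}$ is a normal domain. Finally, excellence is stable under localization—each defining condition (Noetherian, $G$-ring, J-2, universally catenary) passes to localizations—so $R_{\fp}$ is again excellent; in particular it is a local $G$-ring.

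For the completion $A = (R_{\fp})^{\wedge}$, being Noetherian and excellent is automatic: the completion of a Noetherian local ring is Noetherian, and complete Noetherian local rings are excellent. The substance of the proposition is normality of $A$. Here I would use that $R_{\fp}$, being a local $G$-ring, has by definition a regular completion map $R_{\fp} \to A$, i.e. a flat map with geometrically regular fibers. Normality then ascends along this map: by Serre's criterion it suffices to propagate the conditions $(R_1)$ and $(S_2)$, and since $R_{\fp}$ satisfies them while the fibers, being geometrically regular, satisfy $(R_n)$ and $(S_n)$ for all $n$, the standard ascent of these conditions along flat maps with suitably regular fibers shows that $A$ satisfies $(R_1)$ and $(S_2)$, hence is normal. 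As $A$ is local, normality immediately yields that $A$ is a normal domain.

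The main obstacle is this normality step, and it is precisely where excellence is indispensable: for a general Noetherian normal local ring the completion map need not be regular, and completion can then destroy normality or even reducedness. The $G$-ring condition packaged into excellence is exactly what guarantees regularity of $R_{\fp} \to A$, after which the ascent of $(R_1)$ and $(S_2)$ via Serre's criterion is formal.
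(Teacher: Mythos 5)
Your argument rests on a misreading of the statement: you parse $(R[S^{-1}])_{\fp}^{\wedge}$ as ``localize at $\fp$, then complete the resulting local ring at its maximal ideal,'' but in this paper the notation means the $\fp$-\emph{adic} completion of the (generally non-local) ring $R[S^{-1}]$. The two constructions genuinely differ: for $R = k[x,y]$, $S = \{1\}$, $\fp = (x)$, the $\fp$-adic completion is $k[y]\llbracket x \rrbracket$, whereas your ring is $k(y)\llbracket x \rrbracket$. The adic reading is forced by how the proposition is used: in \Cref{prop:lt} it is applied to the triple $(\pi_0R,\{u_{t_1},u_{t_1}^2,\ldots\}, I_{t_1})$ to handle $\pi_0L_{K(t_1)}R \cong (\pi_0R[u_{t_1}^{-1}])_{I_{t_1}}^{\wedge}$, which by Hovey's theorem is an $I_{t_1}$-adic completion and is typically \emph{not} local (e.g.\ $\pi_0L_{K(t)}E_n$ for $t<n-1$). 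It is also signalled by the remark that the proposition ``generalizes [Tag 0C23] to non-local rings,'' and by the fact that under your reading the set $S$ is entirely superfluous --- as you yourself observe, $(R[S^{-1}])_{\fp} \cong R_{\fp}$ --- whereas under the adic reading $S$ changes the answer (compare $k[y]\llbracket x \rrbracket$ with $k[y,y^{-1}]\llbracket x \rrbracket$).

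This is not a cosmetic issue: collapsing to the local case erases exactly the two difficulties the proposition exists to address. First, since $A$ is not local, the implication ``normal $+$ local $\Rightarrow$ domain'' is unavailable. The paper instead decomposes the Noetherian normal ring $A$ into a finite product of normal domains via \Cref{lem:normal}(4), checks that $\fp A$ is prime (using $A/\fp A \cong R[S^{-1}]/\fp R[S^{-1}]$), and then uses the $\fp$-adic completeness of $A$ together with the structure of prime ideals in a finite product to conclude that only one factor survives; none of this appears in your argument. Second, your appeal to ``complete Noetherian local rings are excellent'' is a purely local fact; for ideal-adic completions of non-local rings, excellence of the completion is a recent, nontrivial theorem of Gabber--Kurano--Shimomoto, which is precisely what the paper must invoke. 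The one step of yours that does survive in the adic setting is normality: since $R[S^{-1}]$ is excellent, hence a $G$-ring, the completion map $R[S^{-1}] \to A$ is regular [Tag 0AH2], and normality ascends along regular maps of Noetherian rings [Tag 0C22] --- the same Serre-criterion mechanism you describe, which does not require $A$ to be local. As written, however, your proof establishes only the classical local statement [Tag 0C23], not the proposition actually needed.
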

\begin{proof}
By \cite[Tag 07QU]{stacks-project} and Part (2) of \Cref{lem:normal}, $R[S^{-1}]$ is Noetherian, excellent, and normal and $\fp R[S^{-1}]$ is prime, thus we may assume without loss of generality that $S = \{1\}$. Consider the canonical map $f\colon R \to A=R_{\fp}^{\wedge}$. Since $R$ is excellent and thus a $G$-ring, the map $f$ is regular by \cite[Tag 0AH2]{stacks-project}. The completion of a Noetherian ring is Noetherian, so we may apply \cite[Tag 0C22]{stacks-project} and Part (1) of \Cref{lem:normal} to deduce that $A$ is normal as desired. Since $A$ is Noetherian and normal, Part (4) of \Cref{lem:normal} implies that it is a product of finitely many domains $A_1, \ldots, A_l$. Also, $\fp A$ is prime. Now we have canonical isomorphisms
\[
A \cong A^{\wedge}_{\fp} \cong \underset{i}{\prod} (A_i)^{\wedge}_{\fp}.
\]
By the structure theory of prime ideals in products, only one of the factors will survive, hence $A \cong A_i$ for some $i$. Finally, the completion of an excellent ring is excellent by a recent theorem of Gabber--Kurano--Shimomoto \cite{completionexcellent}. 
\end{proof}

\begin{rem}
In virtue of \cite[Tag 07PW]{stacks-project}, the proof of the above proposition does not obviously work if we assume that $R$ is a Noetherian normal $G$-ring. We see that the conditions on $R$ all conspire to make the proof go through.
\end{rem}

\begin{prop} \label{primeideal}
Let $R$ be a domain, let $\fp = (r_1, \ldots, r_l) \subset R$ be a prime ideal generated by a regular sequence, and let $\fq = (r_1, \ldots, r_m)$ for $m<l$, then $\fq$ is a prime ideal. 
\end{prop}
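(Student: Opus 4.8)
The plan is to peel off the generators of $\fp$ one at a time by downward induction, reducing the whole statement to a single cancellation step. Observe first that for every $j \le l$ the truncation $(r_1,\ldots,r_j)$ is again generated by a regular sequence, since an initial segment of a regular sequence is regular, and it is a proper ideal because it is contained in $\fp \neq R$. It therefore suffices to establish the one-step assertion: if $2 \le j \le l$, the sequence $r_1,\ldots,r_j$ is regular, and $(r_1,\ldots,r_j)$ is prime, then $(r_1,\ldots,r_{j-1})$ is prime. Starting from the hypothesis that $(r_1,\ldots,r_l) = \fp$ is prime and applying this one-step assertion successively for $j = l, l-1, \ldots, m+1$ then yields that $\fq = (r_1,\ldots,r_m)$ is prime, as desired.

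For the one-step assertion I would set $B = R/(r_1,\ldots,r_{j-1})$ and let $x \in B$ be the image of $r_j$. Because $r_1,\ldots,r_j$ is a regular sequence, $x$ is a nonzerodivisor in $B$, and there is a canonical isomorphism $B/xB \cong R/(r_1,\ldots,r_j)$, which is a domain by assumption. The problem is thus reduced to the following purely ring-theoretic statement: if $B$ is Noetherian, $x \in B$ is a nonzerodivisor, and $B/xB$ is a domain, then $B$ is a domain.

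To prove this last statement I would use a valuation-style cancellation. Granting the $x$-adic separatedness $\bigcap_{n \ge 0} x^n B = 0$, every nonzero element $a$ of $B$ can be written as $a = x^{i} a'$ with $a' \notin xB$, and likewise any nonzero $b$ as $b = x^{j} b'$ with $b' \notin xB$. If $ab = 0$, then $x^{i+j} a' b' = 0$, and since $x$ and hence $x^{i+j}$ is a nonzerodivisor we obtain $a' b' = 0 \in xB$; but $B/xB$ is a domain while $a',b' \notin xB$, a contradiction. Hence $B$ has no nonzero zerodivisors. \emph{The main obstacle is exactly the separatedness input} $\bigcap_{n} x^n B = 0$, rather than the cancellation bookkeeping: this is where the hypotheses must genuinely be exploited, and I expect to supply it via Krull's intersection theorem, which gives $\bigcap_n x^n B = 0$ once $x$ lies in the Jacobson radical of the Noetherian ring $B$ — in particular in the local situation governing the rings of interest, where $\fp$ is contained in the maximal ideal. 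Checking that this separatedness is in force in each application is the step I would treat with the most care.
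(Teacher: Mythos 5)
You have isolated exactly the right crux, but the gap you flag cannot be closed from the stated hypotheses, because the proposition as stated is false. Take $R=k[a,b]$, $r_1=ab$, $r_2=a-1$, so $l=2$, $m=1$. This is a regular sequence: $ab$ is a nonzerodivisor, and $a-1$ avoids the associated primes $(a)$ and $(b)$ of $R/(ab)$, hence is a nonzerodivisor on $R/(ab)$. Moreover $(r_1,r_2)=(b,a-1)$, since $b=ab-b(a-1)$, and this ideal is maximal, hence prime; yet $\fq=(ab)$ is not prime. In your notation, the one step with $j=2$ has $B=k[a,b]/(ab)$ and $x=$ the class of $a-1$: then $B/xB\cong k$ is a domain and $x$ is a nonzerodivisor, but $B$ is not a domain. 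Concretely, $b=-b(a-1)$ in $B$, so $0\neq b\in\bigcap_n x^nB$: your separatedness input fails, and indeed $a-1$ does not lie in the Jacobson radical of $B$ (it avoids the maximal ideal $(a,b-1)$), so Krull's theorem has no purchase. The statement becomes true, and your proof becomes complete, once one adds the hypothesis that $R$ is Noetherian and $\fp$ lies in the Jacobson radical of $R$. One caveat on your last sentence: the rings to which this proposition is applied in \Cref{prop:lt} are \emph{not} local, but they are Noetherian and complete with respect to $\fp=I_{t_2}$ itself (base case: $E_n^0$ is complete local), which again forces $\fp$ into the Jacobson radical; then $r_j$ lies in every maximal ideal of $B=R/(r_1,\ldots,r_{j-1})$ and your Krull argument goes through verbatim.

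For comparison, the paper's own proof takes a different route and founders on the same example. It localizes at $\fp$, notes that $R_\fp$ is regular local with the $r_i$ as a regular system of parameters, concludes that $(r_1,\ldots,r_m)R_\fp$ is prime, and then asserts that the preimage of $(r_1,\ldots,r_m)R_\fp$ under $R\to R_\fp$ equals $(r_1,\ldots,r_m)$ ``since the localization map is injective.'' That contraction claim is the flaw: injectivity gives no control over saturation. In the example above, $a$ is invertible in $R_\fp$, so $(ab)R_\fp=(b)R_\fp$, whose preimage in $R$ is $(b)\supsetneq(ab)$; the intermediate steps are fine, it is only the contraction that fails. The contraction equals $\fq$ precisely when every element outside $\fp$ is a nonzerodivisor on $R/\fq$, which in practice one only knows \emph{after} $\fq$ is known to be prime. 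So, under the corrected hypotheses, your argument is not merely valid but preferable to the paper's: it works directly in $R$, peels off one generator at a time, and avoids the problematic contraction step altogether.
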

\begin{proof}
Since $R$ is a domain, the localization map $R \rightarrow R_{\fp}$ is injective. The ring $R_{\fp}$ is regular local with system of parameters given by $r_1, \ldots, r_l$. In such a situation $R_{\fp}/(r_1, \ldots, r_m)$ is prime for any $1 \leq m \leq l$. Now the preimage of $(r_1, \ldots, r_m) \subset R_{\fp}$ is the ideal generated by $(r_1, \ldots, r_m) \subset R$ since the localization map $R \rightarrow R_{\fp}$ is injective.
\end{proof}

\section{Rings in transchromatic homotopy theory}

Throughout this section, we fix a prime $p$ and height $n\ge 0$. Recall that Morava $E$-theory $E_n$ is an even periodic $\bE_{\infty}$-ring spectrum with coefficients
\[
E_n^* := \pi_{-*}E_n \cong \bW k \llbracket u_1,\ldots,u_{n-1} \rrbracket[u^{\pm 1}],
\]
where $\bW k$ is the ring of Witt vectors on a perfect field $k$ of characteristic $p$, the $u_i$'s are in degree 0 and $u$ has degree $2$. Note that $\pi_0E_n = E_n^0$ is a complete regular local Noetherian ring, so in particular an excellent domain. Furthermore, let $K(n)$ be Morava $K$-theory of height $n$ with coefficients $K(n)_* \cong k[u^{\pm 1}]$ and denote by $L_{K(n)}$ the corresponding Bousfield localization functor. If $m<n$, then $L_{K(n)}L_{K(m)}=0$, but the composite $L_{K(m)}L_{K(n)}$ is non-trivial and encodes much of the structure of transchromatic homotopy theory, see for example \cite{hovey_csc}.  

Since all spectra involved are even periodic, we will restrict attention to the degree 0 part of the homotopy groups. In particular, an even periodic module $M$ over an even periodic $\bE_1$-ring spectrum $A$ is said to be flat if $\pi_0M$ is flat as $\pi_0A$-module. This definition is compatible with the one given in \cite{frankland}. 

\begin{lem}\label{lem:flatlt}
Given a sequence $0 \le t_1 \le \cdots \le t_i \le n$ of integers, the canonical localization map 
\[
\xymatrix{M \ar[r] & L_{K(t_1)}\cdots L_{K(t_i)}M }
\]
is flat for any flat $E_n$-module $M$. 
\end{lem}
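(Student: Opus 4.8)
The plan is to reduce to the case of a single localization and then analyze that case through the standard ``invert and complete'' description of $K(t)$-localization on $E_n$-modules.

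First I would argue by induction on $i$. Set $N^{(i+1)} = M$ and $N^{(j)} = L_{K(t_j)} N^{(j+1)}$ for $j = i, i-1, \ldots, 1$, so that $N^{(1)} = L_{K(t_1)}\cdots L_{K(t_i)} M$. Each $N^{(j)}$ is naturally an $E_n$-module, since the $K(s)$-localization of an $E_n$-module is again an $E_n$-module via restriction along $E_n \to L_{K(s)} E_n$. Thus it suffices to prove the single-step claim: for every $0 \le t \le n$ and every flat $E_n$-module $N$, the module $L_{K(t)} N$ is again flat over $E_n$. The hypothesis $t_1 \le \cdots \le t_i$ guarantees that the composite is the homotopically meaningful one (recall that $L_{K(s)} L_{K(t)} = 0$ for $s > t$, see \cite{hovey_csc}), but it plays no further role in the flatness argument.

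For the single step I would invoke the description of $K(t)$-localization of $E_n$-modules: writing $I_t = (p, u_1, \ldots, u_{t-1}) \subset E_n^0$, there is a natural equivalence $L_{K(t)} N \simeq (N[u_t^{-1}])^{\wedge}_{I_t}$, where $N[u_t^{-1}]$ denotes the finite smashing localization inverting $u_t$ and $(-)^{\wedge}_{I_t}$ denotes $I_t$-adic completion (see \cite{hovey_csc} and the references therein). The first step preserves flatness immediately: it induces on homotopy the flat localization $E_n^0 \to E_n^0[u_t^{-1}]$, so $\pi_0(N[u_t^{-1}]) = (\pi_0 N)[u_t^{-1}]$ is flat over $E_n^0[u_t^{-1}]$ and hence over $E_n^0$. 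The entire difficulty is therefore concentrated in the completion step.

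The main obstacle is thus to control the $I_t$-adic completion of the flat module $P := \pi_0(N[u_t^{-1}])$ over the Noetherian ring $R := E_n^0[u_t^{-1}]$, and I would use two commutative-algebra inputs. First, since $I_t$ is generated by the regular sequence $p, u_1, \ldots, u_{t-1}$ and $P$ is flat over $R$, the derived $I_t$-completion of $N[u_t^{-1}]$ is concentrated in degree $0$: flatness gives $P \otimes^{\mathbb{L}}_R R/I_t^m \simeq P/I_t^m P$, the resulting tower has surjective transition maps so that $\lim^1$ vanishes, and hence $\pi_0 L_{K(t)} N \cong P^{\wedge}_{I_t}$. Second, the $I_t$-adic completion of a flat module over a Noetherian ring is flat over the completed ring $R^{\wedge}_{I_t}$ (\cite{stacks-project}); since $R^{\wedge}_{I_t}$ is flat over $R$ and $R$ is flat over $E_n^0$, the composite shows $P^{\wedge}_{I_t}$ is flat over $E_n^0$. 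This establishes the single-step claim and completes the induction. I expect these two inputs to be the delicate points: the underivedness of the completion and its preservation of flatness both rely essentially on the Noetherian hypothesis and on the regularity of the sequence generating $I_t$.
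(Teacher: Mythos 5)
Your proof is correct and takes essentially the same route as the paper: induction on the length of the sequence, reduction to a single localization described as ``invert $u_t$, then complete at $I_t$,'' exactness of localization for the inversion step, and the theorem that $I$-adic completion of a flat module over a Noetherian ring is flat (which the paper attributes to Hovey, proven in \cite[Prop.~A.15]{frankland}) for the completion step. The only cosmetic difference is that you rederive by hand the collapse of the derived completion to the underived one, whereas the paper gets the identification $\pi_0L_{K(t)}N \cong (\pi_0N[u_t^{-1}])^{\wedge}_{I_t}$ for flat $N$ directly from \cite[Cor.~3.10]{bs_centralizers}.
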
 
\begin{proof}
We will prove this by induction on the number $i$ of integers in the sequence. If $i=0$, the claim is trivial, so suppose it is proven for all sequences of numbers $t_2 \le \cdots \le t_i$ and let $t_1 \le t_2$; for simplicity, write $N= L_{K(t_2)}\cdots L_{K(t_i)}M$. By assumption, $\pi_0N$ is a flat $E_n^0$-module, so \cite[Cor.~3.10]{bs_centralizers} shows that 
\[
\pi_0L_{K(t_1)}N \cong (\pi_0N[u_{t_1}^{-1}])_{I_{t_1}}^{\wedge},
\]
where $I_t$ denotes the ideal $(p,u_1,\ldots,u_{t-1})$. Localization is exact, hence $\pi_0N[u_{t_1}^{-1}]$ is flat over $E_n^0$. Therefore, an unpublished theorem of Hovey, proven in \cite[Prop.~A.15]{frankland}, implies that $\pi_0L_{K(t_1)}N$ is flat over $E_n^0$ as claimed. 
\end{proof}

To simplify notation, we shall write $L_{K(T)}$ for the composite functor $L_{K(t_1)}\cdots L_{K(t_i)}$ for any sequence $T = (t_1,\ldots,t_i)$ of integers. Note that, if there is $j$ with $t_{j-1}>t_{j}$ in $T$, then $L_{K(T)} \simeq 0$.

\begin{prop}\label{prop:lt}
The ring $\pi_0L_{K(T)}E_n$ is a Noetherian excellent normal domain for all finite non-increasing sequences $T$ of non-negative integers. 
\end{prop}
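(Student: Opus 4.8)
The plan is to prove the statement by induction on the length $i$ of the sequence $T = (t_1, \ldots, t_i)$, leveraging Proposition~\ref{prop:normal} at each step. The base case is $i=0$, where $\pi_0 L_{K(T)} E_n = E_n^0 \cong \bW k \llbracket u_1, \ldots, u_{n-1} \rrbracket$, which is a complete regular local Noetherian ring and hence a Noetherian excellent normal domain, as already noted in the text. For the inductive step, I would write $T = (t_1, T')$ with $T' = (t_2, \ldots, t_i)$ non-increasing, and since $T$ is non-increasing we have $t_1 \geq t_2$. The subtlety is that the convention writes $L_{K(T)} = L_{K(t_1)} L_{K(t_2)} \cdots L_{K(t_i)}$, so to iterate the formula from the localization map I actually want to peel off the \emph{outermost} functor $L_{K(t_1)}$, applied to $N := L_{K(T')} E_n$.

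**The key computational input** is the identification already exploited in the proof of \Cref{lem:flatlt}. By the inductive hypothesis, $R := \pi_0 N$ is a Noetherian excellent normal domain, and in particular $N$ is a flat $E_n$-module by \Cref{lem:flatlt}. Therefore \cite[Cor.~3.10]{bs_centralizers} gives
\[
\pi_0 L_{K(t_1)} N \cong (R[u_{t_1}^{-1}])_{I_{t_1}}^{\wedge},
\]
where $I_{t_1} = (p, u_1, \ldots, u_{t_1 - 1})$. This exhibits $\pi_0 L_{K(T)} E_n$ as exactly the kind of ring to which \Cref{prop:normal} applies: a localization of $R$ at the multiplicatively closed set $S = \{u_{t_1}^k\}_{k \geq 0}$, followed by completion at a prime ideal.

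**To invoke \Cref{prop:normal}** I must check its hypotheses. The ring $R$ is Noetherian, excellent, and normal (indeed a domain) by the inductive hypothesis, so those conditions are automatic. What remains is to verify that $I_{t_1} R[u_{t_1}^{-1}]$ is a prime ideal and that $S \subseteq R \setminus I_{t_1} R[u_{t_1}^{-1}]$, i.e.\ that $u_{t_1}$ is not in this prime. Since $t_1 \geq t_2 \geq \cdots$, the element $u_{t_1}$ has index at least $t_1$, whereas $I_{t_1}$ is generated by $p, u_1, \ldots, u_{t_1-1}$, so $u_{t_1} \notin I_{t_1}$; this is the point where the non-increasing hypothesis on $T$ is used, guaranteeing the localization and completion are compatible. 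For primality of $I_{t_1}$ inside $R$, I would appeal to \Cref{primeideal}: the generators $p, u_1, \ldots, u_{t_1-1}$ form (an initial segment of) a regular sequence cutting out a prime in the regular local ring $E_n^0$, and one needs to propagate this regularity and primality up through the tower of localizations and completions defining $R$.

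**I expect the main obstacle** to be precisely this last verification: confirming that $I_{t_1}$ remains prime and generated by a regular sequence in the ring $R = \pi_0 L_{K(T')} E_n$, which is no longer the original regular local ring $E_n^0$ but a possibly non-local localization-completion of it. One must ensure the images of $p, u_1, \ldots, u_{t_1-1}$ still behave well. The cleanest route is likely to track, inductively, not merely that each $R$ is a normal domain, but additionally that each ideal $I_s$ (for $s$ at most the smallest entry appearing so far) remains prime and regular in $R$, so that \Cref{primeideal} and \Cref{prop:normal} can be applied cleanly at the next stage; this bookkeeping is the part requiring genuine care rather than a formal citation.
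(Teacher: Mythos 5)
Your proposal follows the paper's proof almost step for step---induction on the length of $T$, the identification $\pi_0 L_{K(t_1)}N \cong (\pi_0 N[u_{t_1}^{-1}])^{\wedge}_{I_{t_1}}$ from the proof of \Cref{lem:flatlt}, and an appeal to \Cref{prop:normal}---but the step you label the ``main obstacle'' and defer to unspecified ``bookkeeping'' is exactly the mathematical content of the inductive step, so as written there is a genuine gap: you never prove that $I_{t_1}$ generates a prime ideal of $\pi_0 N$, where $N = L_{K(T')}E_n$, and this is a hypothesis of \Cref{prop:normal}. Note this is not a routine propagation statement: the quotient $\pi_0 N/I_{t_1}$ is an adic completion of a ring at an ideal strictly larger than the one being quotiented by, and completions of domains need not be domains, so primality does not simply ``pass up the tower.'' The paper closes the gap with no strengthened induction hypothesis, as follows. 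By construction $\pi_0 N \cong (\pi_0 L_{K(T'')}E_n[u_{t_2}^{-1}])^{\wedge}_{I_{t_2}}$ with $T''=(t_3,\ldots,t_i)$ (when $T'$ is empty, $\pi_0 N = E_n^0$ is regular local and primality is classical), so the proof of \Cref{prop:normal} already shows that the completion ideal $I_{t_2}\pi_0 N$ is prime; moreover its generators $p,u_1,\ldots,u_{t_2-1}$ form a regular sequence in $\pi_0 N$ because $\pi_0 N$ is flat over $E_n^0$ by \Cref{lem:flatlt}. Since $I_{t_1}$ is generated by an initial segment of this regular sequence, \Cref{primeideal} applied to the domain $\pi_0 N$ with the prime $I_{t_2}\pi_0 N$ gives primality of $I_{t_1}\pi_0 N$ in one stroke. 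Your proposal instead applies \Cref{primeideal} only inside $E_n^0$ and then asks that the conclusion be propagated---which is precisely what needs proof.

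Separately, you have the monotonicity reversed, and it matters. Since $L_{K(n)}L_{K(m)}=0$ for $m<n$ and $L_{K(T)} = L_{K(t_1)}\cdots L_{K(t_i)}$, the composite is nonzero only when $t_1 \le t_2 \le \cdots \le t_i$ (the outermost localization is at the smallest height), and the inequality $t_1 \le t_2$ is exactly what yields the inclusion $I_{t_1} \subseteq I_{t_2}$ used above. Under your reading $t_1 \ge t_2$, any strictly decreasing step forces $L_{K(T)}E_n = 0$, whose $\pi_0$ is not a domain, and the inclusion $I_{t_1} \subseteq I_{t_2}$ fails. Relatedly, your justification that $u_{t_1} \notin I_{t_1}$---namely that $u_{t_1}$ is not among the listed generators---shows nothing about the ring $\pi_0 N$, since an element can lie in an ideal without being one of its generators. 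The correct argument is again via the regular sequence: for $t_1 < t_2$ the element $u_{t_1}$ is the next term of the regular sequence after the generators of $I_{t_1}$, hence a nonzerodivisor on the nonzero ring $\pi_0 N/I_{t_1}$ and so not in $I_{t_1}$; for $t_1 = t_2$ it is a unit of $\pi_0 N$ while $I_{t_1}\pi_0 N$ is proper.
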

\begin{proof}
We proceed by induction, the base case being clear. Suppose that the claim has been proven for all sequences of length at most $i-1$ and consider a sequence $T = \{t_1 \cup T'\}$ with $T' = (t_2, \ldots, t_i)$ of length $i-1$. Write $R = L_{K(T')}E_n$, so $\pi_0R$ is a Noetherian excellent normal domain by hypothesis. As is shown in the proof of \Cref{lem:flatlt}, there is an isomorphism $\pi_0L_{K(t_1)}R \cong (\pi_0R[u_{t_1}^{-1}])_{I_{t_1}}^{\wedge}$. The ideal $I_{t_1} \subset I_{t_{2}}$ satisfies the condition of \cref{primeideal} and therefore is prime. Thus the triple $(\pi_0R,\{u_{t_1},u_{t_1}^2,\ldots\}, I_{t_1})$ satisfies the assumptions of \Cref{prop:normal}, hence $\pi_0L_{K(t_1)}R$ is a Noetherian excellent normal domain.
\end{proof}

As a special case of \Cref{prop:lt}, we immediately obtain the following corollary, which was used in the proof of \cite[Lem.~4.4]{bs_bpetheory} and provided the original motivation for this note:

\begin{cor}\label{cor:lt}
The ring $L_t = \pi_0L_{K(t)}E_n$ is an excellent domain for all $t$ and $n$. In particular, $L_t$ has the cancellation property. 
\end{cor}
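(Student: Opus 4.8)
The plan is to obtain this corollary as the single-term case of \Cref{prop:lt}. First I would take the sequence $T = (t)$ of length one, for which the composite functor reduces to $L_{K(T)} = L_{K(t)}$, so that $L_t = \pi_0 L_{K(T)}E_n$. A single non-negative integer $t$ is vacuously a non-increasing sequence, since there is no pair of consecutive terms to violate monotonicity; hence \Cref{prop:lt} applies directly and yields that $L_t$ is a Noetherian excellent normal domain. In particular $L_t$ is an excellent domain, which is the first assertion.

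For the second assertion I would simply appeal to the definition of an integral domain. Since $L_t$ has no nonzero zero divisors, whenever $ab = ac$ with $a \neq 0$ one may cancel $a$ to conclude $b = c$. Thus the cancellation property is an immediate formal consequence of $L_t$ being a domain, and no additional argument is required; the point of recording it separately is that this is precisely the form in which the result is invoked in \cite[Lem.~4.4]{bs_bpetheory}.

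I do not expect any substantive obstacle here: the entire mathematical content is already packaged in \Cref{prop:lt}, and the corollary merely isolates the most important special case together with the elementary observation that domains cancel. The only matter worth verifying explicitly is that the one-element sequence legitimately satisfies the hypotheses of \Cref{prop:lt}, which, as noted above, holds trivially.
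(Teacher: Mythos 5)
Your proposal is correct and coincides with the paper's own treatment: the paper derives this corollary as the immediate special case $T=(t)$ of \Cref{prop:lt}, giving no further argument. Your additional remark that cancellation ($ab=ac$, $a\neq 0 \Rightarrow b=c$) is a formal consequence of being a domain is exactly the intended, and entirely standard, justification for the final sentence.
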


In \cite{bs_bpetheory}, we also studied a variant $F_t = L_{K(t)}((E_n)_{I_t})$ of $L_t$, where the localization $(-)_{I_t}$ is understood in the ring-theoretic sense, i.e., as inverting the complement of $I_t$. 

\begin{cor}\label{cor:ft}
The ring $\pi_0F_t$ is an excellent domain. 
\end{cor}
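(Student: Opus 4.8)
The plan is to compute $\pi_0 F_t$ explicitly and then recognize it as a special case of \Cref{prop:normal}, mirroring the proof of \Cref{prop:lt}. The only structural difference between $F_t$ and $L_t$ is that here the localization $(-)_{I_t}$ is taken in the ring-theoretic sense, inverting the entire complement of $I_t$ rather than merely the powers of $u_t$. This turns out to simplify matters: it will let us apply \Cref{prop:normal} in the case where the multiplicative set is the full complement of the prime.

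First I would observe that ring-theoretic localization is flat, so $(E_n)_{I_t}$ is a flat $E_n$-module with $\pi_0 (E_n)_{I_t} \cong (E_n^0)_{I_t}$. Consequently the computation carried out in the proof of \Cref{lem:flatlt} via \cite[Cor.~3.10]{bs_centralizers} applies verbatim, yielding
\[
\pi_0 F_t = \pi_0 L_{K(t)}\big((E_n)_{I_t}\big) \cong \big((E_n^0)_{I_t}[u_t^{-1}]\big)^{\wedge}_{I_t}.
\]
The key simplification is that $u_t \notin I_t = (p, u_1, \ldots, u_{t-1})$, so $u_t$ is \emph{already} a unit in the localization $(E_n^0)_{I_t}$ and inverting it is redundant. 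Hence
\[
\pi_0 F_t \cong \big((E_n^0)_{I_t}\big)^{\wedge}_{I_t}.
\]

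It then remains to feed this into \Cref{prop:normal}. The ring $E_n^0$ is a complete regular local Noetherian ring, hence Noetherian, excellent, and---being regular---normal by \Cref{lem:normal}(1); and $I_t$ is prime. Taking $R = E_n^0$, $\fp = I_t$, and $S = R \setminus \fp$ equal to the whole complement (so that $R[S^{-1}] = (E_n^0)_{I_t}$ is already local with maximal ideal $I_t (E_n^0)_{I_t}$) shows that $\pi_0 F_t$ is a Noetherian excellent normal domain, in particular an excellent domain. I expect no substantial obstacle here: the only points demanding care are confirming that the flatness hypothesis underlying the identity $\pi_0 L_{K(t)}(-) \cong (\pi_0(-)[u_t^{-1}])^{\wedge}_{I_t}$ really holds for the ring-theoretic localization, and then noticing the collapse of the $u_t$-inversion. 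Once the bookkeeping distinguishing the two kinds of localization is straightened out, the statement is an immediate instance of \Cref{prop:normal}.
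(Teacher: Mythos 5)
Your proof is correct and follows the same route as the paper: both identify $\pi_0 F_t \cong ((E_n^0)_{I_t})^{\wedge}_{I_t}$ via the flatness argument of \Cref{lem:flatlt} and then invoke \Cref{prop:normal}. The only difference is that you spell out the intermediate step (the redundancy of inverting $u_t$, since $u_t \notin I_t$ is already a unit in the ring-theoretic localization), which the paper leaves implicit in its phrase ``as in the proof of \Cref{prop:lt}.''
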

\begin{proof}
We see as in the proof of \Cref{prop:lt} that the (degree 0) coefficients of $F_t$ are given by $((E_n^0)_{I_t})_{I_t}^{\wedge}$, to which we can apply \Cref{prop:normal}. 
\end{proof}

We now turn to the rings that feature prominently in the transchromatic character theory of Hopkins, Kuhn, and Ravenel \cite{hkr}, as well as its generalizations~\cite{stapleton_tgcm} and~\cite{bs_centralizers}. To this end, we quickly review the definition and role of the coefficient ring for transchromatic character theory. For any integer $0 \le t \le n$, let $\bG_{L_{K(t)}E_n}$ be the formal group associated to the natural map $MU \to E_n \to L_{K(t)}E_n$, viewed as a $p$-divisible group. In \cite{stapleton_tgcm}, an $L_t$-algebra called $C_t$ is defined which carries the universal isomorphism of $p$-divisible groups
\[
C_t \otimes \bG_{\E} \cong (C_t \otimes \bG_{L_{K(t)}E_n}) \oplus (\Q_p/\Z_p)^{n-t}. 
\]
Note that both $L_t$ and $C_t$ also depend on $n$. From the perspective of stable homotopy theory, $C_t$ is useful because there is a canonical isomorphism (the character map)
\[
C_t \otimes_{E_{n}^0} E_{n}^0(BG) \lra{\cong} C_t \otimes_{\pi_0 L_{K(t)}E_n} L_{K(t)}E_{n}^0(\cL^{n-t}BG),
\]
where $\cL$ denotes the ($p$-adic) free loop space. The ring $C_t$ is a colimit of smaller rings $C_t = \colim_k C_{t,k}$. With $p$ and $n$ fixed implicitly, denote the finite abelian group $(\Z/p^k)^{n-t}$ by $\Lambda_{k,t}$. The ring $C_{t,k}$ is a localization of $L_t \otimes_{\E^0} \E^0(B\lkt^*)$. The ring $\E^0(B\lkt^*)$ carries the universal homomorphism 
\[
\lkt \rightarrow \bG_{E_n}.
\]

Recall from \cite[Section 2]{stapleton_tgcm} that the $p$-divisible group $L_t \otimes_{\E^0} \bG_{E_n}$ is the middle term in a short exact sequence
\[
0 \rightarrow \bG_{L_t} \rightarrow L_t \otimes_{\E^0} \bG_{E_n} \rightarrow \bG_{et} \rightarrow 0,
\]
where $\bG_{et}$ is a height $n-t$ {\'e}tale $p$-divisible group.

Let $T_{t,k} \subset \E^0(B\lkt^*)$ be the multiplicative subset generated by the nonzero image of the canonical map
\[
\lkt \rightarrow \bG_{E_n}(\E^0(B\lkt^*)).
\]
The nonzero image of this map has an explicit description in terms of a coordinate, as we shall explain now. After fixing an isomorphism $\mathcal{O}_{\bG_{E_n}} \cong E_{n}^0\llbracket x \rrbracket$, there is an induced isomorphism
\[
\E^0(B\lkt^*) \cong \E^0\llbracket x_1, \ldots, x_t \rrbracket /([p^k](x_1), \ldots, [p^k](x_t)),
\]
where $[p^k](x)$ is the $p^k$-series of the formal group law determined by the coordinate. The nonzero image of $\lkt$ can be described as the set of nonzero sums
\[
[i_1](x_1) +_{\bG_{E_n}} \ldots +_{\bG_{E_n}} [i_t](x_t).
\]
Of course, we may view $T_{k,t}$ as a subset of $L_{t} \otimes_{\E^0} \E^0(B\lkt^*)$.

Let $S_{t,k} \subset L_{t} \otimes_{\E^0} \E^0(B\lkt^*)$ be the multiplicative subset generated by the nonzero image of the canonical map
\[
\lkt \rightarrow (L_t \otimes_{\E^0} \bG_{E_n})(L_{t} \otimes_{\E^0} \E^0(B\lkt^*)) \rightarrow \bG_{et}(L_{t} \otimes_{\E^0} \E^0(B\lkt^*)).
\]

The ring $C_{t,k}$ is defined to be $S_{t,k}^{-1}(L_{t} \otimes_{\E^0} \E^0(B\lkt^*))$. Instead of working with this ring, we will work with the mild variation obtained by completing at $I_t$
\[
\hat{C}_{t,k} = (C_{t,k})^{\wedge}_{I_t} = (S_{t,k}^{-1}(L_{t} \otimes_{\E^0} \E^0(B\lkt^*)))^{\wedge}_{I_t}.
\]

For a finite abelian group $A$, the scheme of $A$-level structures in the formal group $\bG_{E_n}$ of Morava $E$-theory is represented by a ring $D_A$:
\[
\Level(A,\bG_{E_n}) \cong \Spf(D_A).
\]
This ring was introduced by Drinfeld \cite{drinfeld_ell1}. It was also studied further by Strickland in \cite{strickland_finitesubgroups}.  

\begin{lem}
The ring $D_{\Lambda_{k,t}}$ is a Noetherian excellent normal domain.
\end{lem}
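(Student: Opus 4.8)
The plan is to reduce the statement to the single input that $D_{\Lambda_{k,t}}$ is regular, which is a theorem of Drinfeld and Strickland, and then to read off the remaining properties formally. Recall that $\Lambda_{k,t} = (\Z/p^k)^{n-t}$ is a finite abelian $p$-group of rank $n-t \le n$, and that $\bG_{E_n}$ is the universal deformation of a formal group of height $n$ over $E_n^0$. Strickland constructs $D_{\Lambda_{k,t}}$ as a complete local Noetherian ring, realizing $\Level(\Lambda_{k,t},\bG_{E_n}) = \Spf(D_{\Lambda_{k,t}})$, and under precisely the rank hypothesis $n-t\le n$ shows that it is a regular local ring; see \cite{drinfeld_ell1} and \cite{strickland_finitesubgroups}.

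Granting this, every remaining assertion is immediate. A regular local ring is an integral domain and is Noetherian, so $D_{\Lambda_{k,t}}$ is a Noetherian domain; by Part (1) of \Cref{lem:normal} it is normal; and since it is complete local Noetherian it is excellent by \cite[Tag 07QW]{stacks-project}. Hence $D_{\Lambda_{k,t}}$ is a Noetherian excellent normal domain.

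The genuine content therefore sits entirely in the regularity input, and this is where I expect the only real difficulty to be. Regularity is not visible from the presentation of $D_{\Lambda_{k,t}}$ as a quotient of $E_n^0\llbracket x_1,\ldots,x_{n-t}\rrbracket$ by the level-structure relations extracted from the $p^k$-series $[p^k](x_i)$: a priori these relations could force the embedding dimension of the quotient to exceed its Krull dimension, destroying regularity. That the Drinfeld level-structure condition is exactly rigid enough to prevent this — keeping the maximal ideal generated by $\dim D_{\Lambda_{k,t}}$ elements — is the nontrivial theorem we are citing. The only thing left to check when writing up the argument is thus bookkeeping: that $\Lambda_{k,t}$ meets the hypotheses of the cited results, namely the rank bound $n-t\le n$ and the identification of $\bG_{E_n}$ with the universal deformation, after which the domain, normal, and excellent conclusions follow with no further computation.
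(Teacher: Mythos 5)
Your proof is correct, and the key input is the same as the paper's: Drinfeld's theorem that $D_{\Lambda_{k,t}}$ is regular local, which immediately gives the Noetherian normal domain conclusion. Where you diverge is in how excellence is obtained. The paper notes that $D_{\Lambda_{k,t}}$ is module-finite over $E_n^0$, and since $E_n^0$ is excellent (complete regular local Noetherian) and finite-type algebras over excellent rings are excellent, both Noetherianness and excellence follow at once \cite[Tag 07QW]{stacks-project} --- before any appeal to regularity. You instead invoke the other clause of the same reference: complete local Noetherian rings are excellent. That route is also valid, but note it carries a small extra obligation: you need $D_{\Lambda_{k,t}}$ to be \emph{local} and \emph{complete}, and locality is itself part of Drinfeld's regular-local theorem (completeness then follows from module-finiteness over the complete ring $E_n^0$, or from Strickland's construction directly). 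So your argument threads all four properties through the Drinfeld--Strickland input, whereas the paper's decouples them: excellence and Noetherianness come formally from module-finiteness over $E_n^0$, and only the normal-domain part uses regularity. The paper's decomposition is marginally more robust for that reason, but both are complete proofs of the statement, and your closing discussion correctly identifies where the real mathematical content lies.
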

\begin{proof}
The ring $D_{\Lambda_{k,t}}$ is a module-finite extension of $\E^{0}$. This immediately implies that it is Noetherian and excellent. Drinfeld proves that it is regular local and this implies that it is a normal domain.
\end{proof}

Let $A^*$ be the Pontryagin dual of $A$. The rings $\E^0(B\lkt^*)$ and $D_{\lkt}$ are closely related. There is a canonical surjective map
\[
\pi \colon \E^0(B\lkt^*) \twoheadrightarrow D_{\lkt}
\]
and the kernel is understood by the proof of Proposition 4.3 in \cite{drinfeld_ell1}. It is generated by power series $f_i(x_1, \ldots, x_i)$ for $1 \leq i \leq n-t$, where
\[
f_i(x_1, \ldots, x_i) = \frac{[p^k](x_i)}{g_i(x_1, \ldots, x_i)}
\]
and 
\[
g_i(x_1, \ldots, x_i) = \Prod{(j_1, \ldots, j_{i-1}) \in \Lambda_{k,i-1}} \big ( x_i - [j_1](x_1) +_{\bG_{E_n}} \ldots +_{\bG_{E_n}} [j_{i-1}](x_{i-1}) \big ).
\]

\begin{prop}\label{prop:ctkformula}
There is a canonical isomorphism
\[
\hat{C}_{t,k} \cong (T_{t,k}^{-1} (L_t \otimes_{\E^0} D_{\lkt}))^{\wedge}_{I_t}.
\]
\end{prop}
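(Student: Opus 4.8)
The plan is to factor the asserted isomorphism through the surjection $\pi\colon \E^0(B\lkt^*)\twoheadrightarrow D_{\lkt}$ and to treat the two discrepancies between the two sides separately: the rings $\E^0(B\lkt^*)$ versus $D_{\lkt}$, and the multiplicative sets $S_{t,k}$ versus $T_{t,k}$. Write $R = L_t\otimes_{\E^0}\E^0(B\lkt^*)$, so that $\hat{C}_{t,k} = (S_{t,k}^{-1}R)^{\wedge}_{I_t}$, and let $\phi$ denote the universal homomorphism $\lkt\to\bG_{E_n}$. Base changing $\pi$ along $\E^0\to L_t$ gives a surjection $R\twoheadrightarrow L_t\otimes_{\E^0}D_{\lkt}$. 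Since localization and $I_t$-completion commute with these tensor products, it suffices to produce a canonical isomorphism $(S_{t,k}^{-1}R)^{\wedge}_{I_t}\cong (T_{t,k}^{-1}(L_t\otimes_{\E^0}D_{\lkt}))^{\wedge}_{I_t}$, which I would assemble from the two steps below.

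\emph{Step A: inverting $T_{t,k}$ identifies $\E^0(B\lkt^*)$ with $D_{\lkt}$.} The kernel of the base-changed $\pi$ is generated by the images of the power series $f_i = [p^k](x_i)/g_i$. The key observation is that in $R$ one has $f_i\cdot g_i = [p^k](x_i) = 0$, since $[p^k](x_i)$ already vanishes in $\E^0(B\lkt^*)$. Moreover each factor of $g_i$ is the coordinate of a nonzero point in the image of $\lkt\to\bG_{E_n}$ and hence belongs to $T_{t,k}$, so $g_i$ becomes a unit in $T_{t,k}^{-1}R$. Consequently each $f_i$ vanishes in $T_{t,k}^{-1}R$, the kernel dies, and $\pi$ becomes an isomorphism $T_{t,k}^{-1}R\xrightarrow{\sim}T_{t,k}^{-1}(L_t\otimes_{\E^0}D_{\lkt})$. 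Completing at $I_t$ then reduces the whole claim to an isomorphism $(S_{t,k}^{-1}R)^{\wedge}_{I_t}\cong(T_{t,k}^{-1}R)^{\wedge}_{I_t}$.

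\emph{Step B: after $I_t$-completion, $S_{t,k}$ and $T_{t,k}$ define the same localization.} Here I would exploit the connected--\'etale sequence $0\to\bG_{L_t}\to L_t\otimes_{\E^0}\bG_{E_n}\to\bG_{et}\to 0$, which is exactly what separates the two multiplicative sets: $T_{t,k}$ inverts the nonzero coordinates of the points $\phi(a)$ in $L_t\otimes_{\E^0}\bG_{E_n}$, whereas $S_{t,k}$ inverts the nonzero coordinates of their images in $\bG_{et}$. The plan is to show that each generator of one set becomes a unit after inverting the other and completing. In one direction, the coordinate of $\phi(a)$ and that of its image in $\bG_{et}$ differ by a unit in the $I_t$-complete ring, because modulo $I_t$ the kernel $\bG_{L_t}$ is infinitesimal and the projection to $\bG_{et}$ is an isomorphism on the relevant torsion. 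In the other direction, inverting $S_{t,k}$ forces $\lkt\to\bG_{et}[p^k]$ to be injective, hence an isomorphism of free $\Z/p^k$-modules of equal rank $n-t$, so that no nonzero $a$ has $\phi(a)$ in the formal part and the index sets of $S_{t,k}$ and $T_{t,k}$ match. Combining Steps A and B yields the stated isomorphism. The routine parts---commuting localization and completion past the tensor products, and the vanishing computation of Step A---are straightforward; the main obstacle is Step B. The delicate point is to control precisely those elements of $\lkt$ whose image lands in the formal part $\bG_{L_t}$, and to verify that it is genuinely the $I_t$-completion, rather than mere localization, that forces the coordinate of a point and of its \'etale image to become associate. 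This is where the hypothesis of completing at $I_t$ is essential and where the bulk of the argument lies.
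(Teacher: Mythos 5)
Your overall architecture---factor through the surjection $\pi$ and split the problem into (A) inverting $T_{t,k}$ kills $\ker \pi$, and (B) inverting $S_{t,k}$ and inverting $T_{t,k}$ give the same ring after $I_t$-completion---is exactly the paper's decomposition, and your Step A is essentially the paper's second step: each factor of $g_i$ becomes invertible after inverting $T_{t,k}$, so $f_i = (f_i g_i)\,g_i^{-1} = [p^k](x_i)\, g_i^{-1} = 0$ there. One small repair is needed in Step A: the factors of $g_i$ involve ordinary subtraction $x_i - s$, so they are \emph{not} literally coordinates of points of the image of $\lkt$ and hence not elements of $T_{t,k}$; you need the observation (made explicitly in the paper) that $a -_{\bG_{E_n}} b$ is a unit multiple of $a-b$ in a complete local ring, so that each factor is a unit multiple of an element of $T_{t,k}$.

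Step B, however, contains a genuine gap, precisely at the point you yourself flag as carrying the bulk of the argument. The claim that the coordinate of $\phi(a)$ and that of its image in $\bG_{et}$ ``differ by a unit in the $I_t$-complete ring, because modulo $I_t$ the kernel $\bG_{L_t}$ is infinitesimal and the projection to $\bG_{et}$ is an isomorphism on the relevant torsion'' is false. Modulo $I_t$, $\bG_{L_t}[p^k]$ is a nontrivial connected finite group scheme of order $p^{kt}$ (infinitesimal does not mean trivial), so the projection $L_t \otimes_{\E^0}\bG_{E_n}[p^k] \to \bG_{et}[p^k]$ is a finite flat map of degree $p^{kt}$, not an isomorphism. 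The actual relation---this is the computation from \cite[proof of Prop.~2.5]{stapleton_tgcm} that the paper's proof quotes---is that on rings of functions mod $I_t$ the projection sends the coordinate $y$ of $\bG_{et}[p^k]$ to $x^{p^{kt}}$. Hence the generator of $S_{t,k}$ attached to $a$ is, mod $I_t$, the $p^{kt}$-th \emph{power} of the generator of $T_{t,k}$ attached to $a$; they differ by the factor $x(\phi(a))^{p^{kt}-1}$, which is a unit only after $T_{t,k}$ has already been inverted, so your ``differ by a unit'' step is circular. The correct conclusion comes from a different mechanism: inverting an element is the same as inverting any of its powers, and after $I_t$-completion invertibility is detected mod $I_t$, so the two localizations agree after completion. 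Your other sub-claim, that inverting $S_{t,k}$ ``forces'' $\lkt \to \bG_{et}[p^k]$ to be injective, is likewise unsupported---localizing at a set of nonzero elements can never turn a vanishing coordinate into a nonzero one---so it cannot serve to match the index sets of the two multiplicative sets; it is again the mod-$I_t$ power relation that ties the two generating sets together element by element.
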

\begin{proof}
This will be proved in two steps. First, inverting $T_{t,k}$ and inverting $S_{t,k}$ in $L_{t} \otimes_{\E^0} \E^0(B\lkt^*)$ give the same ring after $I_t$-completion. Secondly, we show that inverting $T_{t,k}$ in $\E^0(B\lkt^*)$ kills the kernel of $\pi$.

It suffices to prove the first claim after taking the quotient by $I_t$. By \cite[proof of Proposition 2.5]{stapleton_tgcm}, after taking the quotient, the ring of functions applied to the quotient
\[
L_t \otimes_{\E^0} \bG_{E_n}[p^k] \rightarrow \bG_{et}[p^k]
\]
sends the coordinate $y$ of $\bG_{et}[p^k]$ to the function $x^{p^{kt}}$ on $L_t \otimes_{\E^0}\bG_{E_n}[p^k]$. Thus the set $T_{t,k}$ is the $p^{kt}$ powers of the elements in $S_{t,k}$. Inverting an element is equivalent to inverting any of its powers.

Since $a -_{\bG_{E_n}} b$ is a unit multiple of $a-b$ (for any elements $a,b$ in the maximal ideal of a complete local ring), it follows from Drinfeld's description of the kernel of $\pi$ that inverting $T_{t,k}$ in $\E^0(B\lkt^*)$ kills the kernel of $\pi$. Since $D_{\lkt}$ is a quotient of $\E^0(B\lkt^*)$, there is an isomorphism
\[
T_{t,k}^{-1}\E^0(B\lkt^*) \cong T_{t,k}^{-1}D_{\lkt}.
\]
\end{proof}

\begin{cor}\label{cor:ctk}
For all $k$, the rings $\hat{C}_{t,k}$ are Noetherian excellent normal domains. 
\end{cor}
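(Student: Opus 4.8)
The plan is to combine the formula of \Cref{prop:ctkformula} with the closure result \Cref{prop:normal}. Set $R = L_t \otimes_{\E^0} D_{\lkt}$, so that \Cref{prop:ctkformula} reads $\hat{C}_{t,k} \cong (T_{t,k}^{-1}R)^{\wedge}_{I_t}$. It therefore suffices to check that the triple $(R, T_{t,k}, I_t R)$ satisfies the hypotheses of \Cref{prop:normal}: that $R$ is a Noetherian excellent normal ring, that $I_t R$ is prime, and that $T_{t,k} \subseteq R \setminus I_t R$. Granting these, \Cref{prop:normal} immediately identifies $\hat{C}_{t,k}$ as a Noetherian excellent normal domain.

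That $R$ is Noetherian and excellent is formal: since $D_{\lkt}$ is module-finite over $\E^0$, the ring $R$ is module-finite over $L_t$, which is Noetherian and excellent by \Cref{cor:lt}, and finite-type algebras over excellent rings are excellent. For normality I would realize $R$ as a base change. The map $D_{\lkt} \to R = L_t \otimes_{\E^0} D_{\lkt}$ is obtained from the localization--completion map $f\colon \E^0 \to L_t \cong (\E^0[u_t^{-1}])^{\wedge}_{I_t}$ by base change along $\E^0 \to D_{\lkt}$. As in the proof of \Cref{prop:normal}, $f$ is regular because $\E^0$ is excellent, hence a $G$-ring; since regular morphisms are stable under base change, $D_{\lkt} \to R$ is again regular. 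As $D_{\lkt}$ is normal, being regular local by Drinfeld's theorem \cite{drinfeld_ell1}, the ascent of normality along regular morphisms---precisely the step carried out in \Cref{prop:normal} via \Cref{lem:normal}(1)---shows that $R$ is normal. It is essential here to base change the regular map $f$ rather than the finite map $\E^0 \to D_{\lkt}$, whose fibers are ramified and not geometrically regular.

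The delicate point is that $I_t R$ is a prime ideal. Since $L_t/I_t L_t \cong (\E^0/I_t)[u_t^{-1}]$ is a localization of $\E^0/I_t$, the special fiber collapses to a localization of the special fiber of $D_{\lkt}$, giving
\[
R/I_t R \cong (D_{\lkt}/I_t D_{\lkt})[u_t^{-1}],
\]
so that I must show $T_{t,k}^{-1}(D_{\lkt}/I_t D_{\lkt})[u_t^{-1}]$ is a nonzero domain. Geometrically this is the special fiber of the scheme of level structures after inverting $u_t$, along which $\bG_{E_n}$ degenerates to height $t$, and inverting $T_{t,k}$ is exactly what splits off the \'etale summand $(\Z/p^k)^{n-t}$, as recorded by the isomorphism of $p$-divisible groups recalled before \Cref{prop:ctkformula}. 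I expect this to be the main obstacle. I would resolve it using the explicit coordinates together with the analysis of the degeneration in \cite{stapleton_tgcm}, showing that after these localizations the relevant fiber becomes irreducible and that the elements of $T_{t,k}$, the coordinates of the nonzero $\lkt$-points, remain nonzero modulo $I_t$; this makes $I_t R$ both proper and prime.

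With these three inputs established, \Cref{prop:normal} applies to $(R, T_{t,k}, I_t R)$ and, through the isomorphism of \Cref{prop:ctkformula}, exhibits $\hat{C}_{t,k}$ as a Noetherian excellent normal domain, as claimed.
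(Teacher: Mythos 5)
You follow the paper's skeleton exactly: \Cref{prop:ctkformula} combined with \Cref{prop:normal}, applied to rings built out of $D_{\lkt}$. Your verifications that $R=L_t\otimes_{\E^0}D_{\lkt}$ is Noetherian and excellent (via finiteness over $L_t$ and \Cref{cor:lt}) and normal (via base change of the regular map $\E^0\to L_t$ along the finite map $\E^0\to D_{\lkt}$) are correct, and are a reasonable alternative to the paper's device of writing $R\cong(D_{\lkt}[u_t^{-1}])^{\wedge}_{I_t}$ and invoking the ascent results inside \Cref{prop:normal}. The plan breaks, however, at the hypothesis you single out: $I_tR$ is \emph{not} a prime ideal of $R$ in general, so \Cref{prop:normal} cannot be applied to the triple $(R,T_{t,k},I_tR)$. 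Take $n=2$, $t=1$, $k=1$, so $\lkt=\Z/p$ and $D_{\lkt}=\E^0\llbracket x\rrbracket/([p](x)/x)$. Modulo $p$ the $p$-series factors through Frobenius, $[p](x)\equiv(\mathrm{unit})\cdot x^p\,Q(x^p)\pmod p$, where $Q$ is monic of degree $p-1$ with constant term a unit multiple of $u_1$ and the other lower coefficients in $(u_1)$; hence
\[
R/I_1R\cong\bigl(D_{\lkt}/I_1D_{\lkt}\bigr)[u_1^{-1}]\cong\Bigl(k\llbracket u_1,x\rrbracket/\bigl(x^{p-1}Q(x^p)\bigr)\Bigr)[u_1^{-1}],
\]
in which $x^{p-1}$ and $Q(x^p)$ are nonzero zero divisors (inverting $u_1$ kills neither). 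Geometrically, these are the components where the level-structure point collides with the formal part of $\bG_{E_n}$ modulo $I_t$; they die only after inverting $T_{t,k}$, whose elements here are unit multiples of $x$. So the localization at $T_{t,k}$ must be performed \emph{before} one has a prime ideal: \Cref{prop:normal} has to be applied to $T_{t,k}^{-1}R$ with trivial multiplicative set, which is how the paper's ``iterated localization and completion'' should be read.

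After this repair, the hypothesis that remains to be checked is the one you state a few lines later but do not prove: that $T_{t,k}^{-1}\bigl((D_{\lkt}/I_tD_{\lkt})[u_t^{-1}]\bigr)$ is a nonzero domain, equivalently that $I_t$ generates a prime ideal of $T_{t,k}^{-1}R$. Your proposal defers this (``I expect this to be the main obstacle\dots I would resolve it using the explicit coordinates''), but it is not a side condition: it is precisely where the domain conclusion of \Cref{cor:ctk} lives, since Noetherianness, excellence, and normality of $\hat{C}_{t,k}$ already follow from the ascent arguments with no primality input at all. Nor is it routine: modulo $I_t$, the ring $T_{t,k}^{-1}R$ is the ring of functions on the scheme of trivializations $\lkt\cong\bG_{et}[p^k]$ over $L_t/I_t\cong k\llbracket u_t,\dots,u_{n-1}\rrbracket[u_t^{-1}]$, so its irreducibility amounts to a full-monodromy (Igusa-type) statement for the \'etale quotient. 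In the minimal example above it can be checked by hand---the Newton polygon of $Q(x^p)$ is a single segment from $(0,1)$ to $(p(p-1),0)$ with no interior lattice points, so $Q(x^p)$ is irreducible over $k(\!(u_1)\!)$---but for general $k$ and $n-t$ a genuine argument is required and is missing. (To be fair, the paper's own one-line proof leaves this verification implicit as well; but your write-up both asserts a false intermediate statement and omits the decisive one.)
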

\begin{proof}
Since $D_{\lkt}$ is a Noetherian excellent normal domain and $\hat{C}_{t,k}$ can be constructed from $D_{\lkt}$  by iterated localization and completion, \cref{prop:normal} applies. 
\end{proof}

\begin{cor}\label{cor:ct}
The ring $\colim_k \hat{C}_{t,k}$, which receives a canonical map from $C_t$, is normal.
\end{cor}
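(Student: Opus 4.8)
The plan is to reduce immediately to the third part of \Cref{lem:normal}, which asserts that a filtered colimit of normal rings is normal. The only real content is then to exhibit $\colim_k \hat{C}_{t,k}$ as a filtered colimit of rings that are already known to be normal.

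First I would record that the assignment $k \mapsto \hat{C}_{t,k}$ is a functor on the directed poset $(\N,\le)$. The rings $C_{t,k}$ come equipped with the transition maps $C_{t,k} \to C_{t,k+1}$ that define $C_t = \colim_k C_{t,k}$. Since every ring in sight is an $\E^0$-algebra (indeed an $L_t$-algebra) and the ideal $I_t$ is extended from $\E^0$, these maps carry $I_tC_{t,k}$ into $I_tC_{t,k+1}$ and hence induce maps $\hat{C}_{t,k} \to \hat{C}_{t,k+1}$ on $I_t$-completions, compatibly in $k$. This produces a filtered (in fact sequential) system of rings whose colimit is $\colim_k \hat{C}_{t,k}$, and the canonical map $C_t \to \colim_k \hat{C}_{t,k}$ of the statement is obtained by passing to the colimit of the completion maps $C_{t,k} \to \hat{C}_{t,k}$.

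Second, by \Cref{cor:ctk} each $\hat{C}_{t,k}$ is a Noetherian excellent normal domain, in particular normal. Applying \Cref{lem:normal}(3) to the filtered system constructed above then yields that $\colim_k \hat{C}_{t,k}$ is normal, as claimed.

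The main—and essentially only—obstacle is the bookkeeping in the first step: one must check that the transition maps of the system $\{C_{t,k}\}$ are genuinely compatible with $I_t$-completion, so that the completed system is well defined and filtered. Once this is in place the result is formal, since, in contrast to the individual rings where localization and completion had to be controlled with care, normality is preserved under filtered colimits with no Noetherian or excellence hypotheses whatsoever. This is precisely why the conclusion only asserts normality: the colimit need not be Noetherian or excellent, so one cannot expect the full strength of \Cref{cor:ctk} to persist in the limit.
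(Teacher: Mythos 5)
Your proposal is correct and is exactly the paper's argument: the result follows immediately from \Cref{cor:ctk} together with Part (3) of \Cref{lem:normal}. The bookkeeping you carry out in your first step (that the transition maps are compatible with $I_t$-completion, since $I_t$ is extended from $\E^0$) is a reasonable check that the paper leaves implicit in writing $\colim_k \hat{C}_{t,k}$, but it does not change the substance of the proof.
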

\begin{proof}
This is an immediate consequence of \Cref{cor:ctk} and Part (3) of \Cref{lem:normal}. 
\end{proof}

For more many purposes in transchromatic homotopy theory (e.g.,~\cite{bs_centralizers}), it is more convenient to work with the completion of $\colim_k \hat{C}_{t,k}$, so we end this note with the following question.

\begin{quest}
What can be said about $(\colim_k \hat{C}_{t,k})^{\wedge}_{I_t}$?
\end{quest}

\biblio
\bibliography{duality}\bibliographystyle{alpha}
\end{document}